\def\Xint#1{\mathchoice
   {\XXint\displaystyle\textstyle{#1}}%
   {\XXint\textstyle\scriptstyle{#1}}%
   {\XXint\scriptstyle\scriptscriptstyle{#1}}%
   {\XXint\scriptscriptstyle\scriptscriptstyle{#1}}%
   \!\int}
\def\XXint#1#2#3{{\setbox0=\hbox{$#1{#2#3}{\int}$}
     \vcenter{\hbox{$#2#3$}}\kern-.5\wd0}}
\def\dashint{\Xint-}
\begin{document}

\title{К теории соболевких отображений на комплексной плоскости}

\title{Foward the theory of Sobolev's mappings on the plane }

\author{ Руслан Р. Салимов}

\date{11.06.2010}

\theoremstyle{plain}
\newtheorem{theorem}{Теорема}[section]
\newtheorem{lemma}{Лемма}[section]
\newtheorem{proposition}{Предложение}[section]
\newtheorem{corollary}{Следствие}[section]
\newtheorem{definition}{Определение}[section]
\theoremstyle{definition}
\newtheorem{example}{Пример}[section]
\newtheorem{remark}{Замечание}[section]
\newcommand{\keywords}{\textbf{Key words.  }\medskip}
\newcommand{\subjclass}{\textbf{MSC 2000. }\medskip}
\renewcommand{\abstract}{\textbf{Аннотация.  }\medskip}
\numberwithin{equation}{section}

\maketitle

\begin{abstract}

The  paper is devoted to the study of homeomoephisms with finite distortion on the plane with use of the modulus techniques.  
\end{abstract}

В статье изучаются гомеоморфизмы с конечным искажением на плоскости с использованием модульной техники.

\section{Введение}
Непрерывное отображение $\gamma$ открытого подмножества $\Delta$
действительной оси ${\Bbb R}$ или окружности в $D$ называется {\bf
штриховой линией}, см., например, раздел 6.3 в \cite{MRSY}.
Напомним, что любое открытое множество $\Delta$ в ${\Bbb R}$ состоит
из счетного набора попарно непересекающихся интервалов. Это дает
мотивировку для термина "штриховая линия".

Пусть задано семейство $\Gamma$ штриховых линий $\gamma$ в
комплексной плоскости ${\Bbb C}$. Борелевскую функцию $\varrho:{\Bbb
C}\to[0,\infty]$ называют {\bf допустимой} для $\Gamma$, пишут
$\varrho\in{\rm adm}\,\Gamma$, если
\begin{equation}\label{eq1.2KR} \int\limits_{\gamma}\varrho\,ds\
\geqslant\ 1\qquad\forall\ \gamma\in\Gamma.\end{equation}

Пусть $p\geq 1$. Тогда   {\bf
$p$--модулем} семейства $\Gamma$ называется величина
\begin{equation}\label{eq1.3KR}M_p(\Gamma)\ =\
\inf_{\varrho\in\mathrm{adm}\,\Gamma}\int\limits_{{\Bbb
C}}\varrho^p(z)\,dm(z)\end{equation} где $dm(z)$ соответствует мере
Лебега в ${\Bbb C}$. Говорят, что свойство $P$ имеет место для {\bf $p$--
п.в.} (почти всех) $\gamma\in\Gamma$, если подсемейство всех линий в
$\Gamma$, для которых $P$ не верно имеет нулевой $p$--модуль, ср.
\cite{Fu}. Также говорят, что измеримая по Лебегу функция
$\varrho:{\Bbb C}\to[0,\infty]$ является {\bf обобщенно допустимой}
для $\Gamma$, пишут $\varrho\in{\rm ext_p\,adm}\,\Gamma$, если
(\ref{eq1.2KR}) имеет место для $p$--п.в. $\gamma\in\Gamma$, см.,
например, раздел 9.2 в \cite{MRSY}.

\section{О емкостях }

Следуя работе \cite{MRV}, пару $\mathcal{E}=(A,C)$, где $A\subset\mathbb{C}$
-- открытое множество и $C$ -- непустое компактное множество,
содержащееся в $A$, называем {\it конденсатором}. Конденсатор $\mathcal{E}$
называется  {\it кольцевым конденсатором}, если $B=A\setminus C$ --
кольцо, т.е., если $B$ -- область, дополнение которой
$\overline{\mathbb{C}}\setminus B$ состоит в точности из двух
компонент. Конденсатор $\mathcal{E}$ называется {\it ограниченным
конденсатором}, если множество $A$ является ограниченным. Говорят
также, что конденсатор $\mathcal{E}=(A,C)$ лежит в области $D$, если $A\subset
D$. Очевидно, что если $f:D\to\mathbb{C}$ -- непрерывное, открытое
отображение и $\mathcal{E}=(A,C)$ -- конденсатор в $D$, то $(fA,fC)$ также
конденсатор в $fD$. Далее $f\mathcal{E}=(fA,fC)$.

\medskip

Пусть $\mathcal{E}=(A,C)$ -- конденсатор. Обозначим $C_0(A)$ через множество
непрерывных функций $u:A\to\mathbb{R}^1$ с компактным носителем.
$W_0(\mathcal{E})=W_0(A,C)$ -- семейство неотрицательных функций
$u:A\to\mathbb{R}^1$ таких, что 1) $u\in C_0(A)$, 2)
$u(x)\geqslant1$ для $x\in C$ и 3) $u$ принадлежит классу ${\rm
ACL}$ и пусть
\begin{equation}\label{eqks2.5}\vert\nabla
u\vert={\left(\sum\limits_{i=1}^2\,{\left(\partial_i u\right)}^2
\right)}^{1/2}.\end{equation}

При $p\geqslant1$ величину
\begin{equation}\label{eqks2.6}{\rm cap}_p\,\mathcal{E}={\rm cap}_p\,(A,C)=\inf\limits_{u\in W_0(\mathcal{E})}\,
\int\limits_{A}\,\vert\nabla u\vert^p\,dm(z)\end{equation} называют
{\it $p$-ёмкостью} конденсатора $\mathcal{E}$. В дальнейшем мы будем
использовать равенство
\begin{equation}\label{eqks2.7}
{\rm cap}_p\,\mathcal{E}=M_p(\Delta(\partial A,\partial C; A\setminus
C)),\end{equation} где для множеств $\mathcal{S}_1$, $\mathcal{S}_2$
и $\mathcal{S}_3$ в $\mathbb{C}$,
$\Delta(\mathcal{S}_1,\mathcal{S}_2;\mathcal{S}_3)$ обозначает
семейство всех непрерывных кривых, соединяющих $\mathcal{S}_1$ и
$\mathcal{S}_2$ в $\mathcal{S}_3$, см. \cite{G}, \cite{H} и
\cite{Sh}. Емкости в контексте теории отображений хорошо отражены в
монографии \cite{GR}.

Известно, что при $p\geqslant1$ \begin{equation}\label{eqks2.8} {\rm
cap}_p\,\mathcal{E}\geqslant\frac{\left(\inf
m_{n-1}\,\sigma\right)^{p}}{\left[m(A\setminus
C)\right]^{p-1}},\end{equation} где $l(\sigma)$ -- длина кривой, где $\sigma$-гладкая (бесконечно дифференцируемая) кривая, которая является  границей $\sigma=\partial U$ ограниченного открытого
множества $U$, содержащего $C$ и содержащегося вместе со своим
замыканием $\overline{U}$ в $A$, а точная нижняя грань берется по
всем таким $\sigma$, см. предложение 5 из \cite{Kru}.

\medskip

Известно, что при $1\leq p<2$ \begin{equation}\label{maz} {\rm
cap}_{p}\, \mathcal{E}\geqslant 2\pi^{\frac{p}{2}}
\left(\frac{2-p}{p-1}\right)^{p-1}\left[m(C)\right]^{\frac{2-p}{2}}\end{equation}
см., напр., п. 1.4. в \cite{MAZ}.

При $1<p\leq 2$ имеет место оценка
 \begin{equation}\label{krd}
\left({\rm cap}_{p}\,\,
\mathcal{E}\right)\,\ge\,\gamma\,\frac{d(C)^{p}}{m(A)^{p-1}}\,\,,
\end{equation}
где $d(C)$ - диаметр компакта $C$, $m(A)$ - мера Лебега множества
$A$, $\gamma$ - положительная константа, зависящая только от $p\,,$ см. предложение  в \cite {Kru}.

\section{О нижних Q-гомеоморфизмах относительно $p$-модуля }

Следующее понятие мотивировано кольцевым определением
квазиконформности по Герингу, см., например, \cite{Ge$_1$}. Для
заданных областей $D$ и $D'$ в $\Bbb C$, $z_0\in D$, и
измеримой функции $Q:D\to(0,\infty)$, говорят, что гомеоморфизм
$f:D\to D'$ является {\bf нижним $Q$-гомеомор\-физмом относительно $p$-модуля в точке}
$z_0$, если
\begin{equation}\label{nizh}M_p(f\Sigma(z_0,r_1,r_2))\ \geqslant\
\inf\limits_{\varrho\in{\rm ext_p\,adm}\,\Sigma(z_0,r_1,r_2)}\
\int\limits_{R(z_0,r_1,r_2)}\frac{\varrho^p(z)}{Q(z)}\,dm(z)\end{equation} для
каждого кольца $$R(z_0,r_1,r_2)=\{z\in\Bbb  C:r_1<|z-z_0|<r_2\},\
0<r_1<r_2<d_0,$$ где $d_0=dist(z_0, \partial D)$ и $\Sigma(z_0,r_1,r_2)$ обозначает семейство,  всех  окружностей
$C(z_0,r)=\{z\in\mathbb{C}:|\,z-z_0|=r\},\ r\in(r_1,r_2)$.

Прежде чем доказывать основную лемму о нижних $Q-$го\-мео\-морфиз\-мах относительно $p-$мо\-ду\-ля, приведем
вспомогательную лемму из работы \cite{MRSY}.

\medskip

\begin{lemma}\label{thKPR3.1} {\it Пусть $(X,\mu)$ --- измеримое пространство с
конечной мерой $\mu$, $q\in(1,\infty)$, и пусть
$\varphi:X\to(0,\infty)$
--- измеримая функция. Положим
\begin{equation}\label{I}I(\varphi,q)=\inf\limits_{\alpha}
\int\limits_{X}\varphi\,\alpha^q\,d\mu\,,\end{equation} где инфимум берется по всем измеримым функциям
$\alpha:X\to[0,\infty]$ таким, что
\begin{equation}\label{Sal_eq2.1.8}\int\limits_{X}\alpha\,d\mu=1\,.\end{equation}
Тогда
\begin{equation}\label{Sal_eq2.1.9}I(\varphi,q)=\left[\int\limits_{X}\varphi^{-\lambda}\,d\mu\right]
^{-\frac{1}{\lambda}}\,,\end{equation} где
\begin{equation}\label{Sal_eq2.1.10}\lambda=\frac{q'}{q}\,,\qquad\frac{1}{q}+\frac{1}{q'}=1\,,\end{equation}
т.е. $\lambda=1/(q-1)\in(0,\infty)$. Кроме того, инфимум в (\ref{I}) достигается только для функции
\begin{equation}\label{Sal_eq2.1.11}\alpha_0=\gamma\cdot\varphi^{-\lambda}\,,\end{equation} где}
\begin{equation}\label{Sal_eq2.1.12}\gamma=\left(\int\limits_{X}\varphi^{-\lambda}\,d\mu\right)^{-1}\,.\end{equation}
\end{lemma}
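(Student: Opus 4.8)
The plan is to recognize this as the classical constrained minimization problem that is solved in one line by Hölder's inequality: the lower bound for $I(\varphi,q)$ comes directly from Hölder, and the matching upper bound comes from testing the explicit candidate $\alpha_0$. Before anything else I would record the exponent identities that will be used repeatedly, namely
\[
\lambda=\frac{q'}{q}=\frac{1}{q-1},\qquad \lambda q=q',\qquad 1-q'=-\lambda,\qquad \frac{q}{q'}=q-1=\frac1\lambda,
\]
so that the bookkeeping is done once and for all.

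For the lower bound I would take an arbitrary admissible $\alpha$ (so $\int_X\alpha\,d\mu=1$), write the factorization $\alpha=(\varphi^{1/q}\alpha)\cdot\varphi^{-1/q}$, and apply Hölder's inequality with the conjugate exponents $q$ and $q'$:
\[
1=\int_X\alpha\,d\mu\ \le\ \left(\int_X\varphi\,\alpha^{q}\,d\mu\right)^{1/q}\left(\int_X\varphi^{-q'/q}\,d\mu\right)^{1/q'}.
\]
Since $q'/q=\lambda$, rearranging and raising to the appropriate power (using $q/q'=1/\lambda$) gives $\int_X\varphi\,\alpha^{q}\,d\mu\ge\bigl(\int_X\varphi^{-\lambda}\,d\mu\bigr)^{-1/\lambda}$ for every admissible $\alpha$, hence the same bound for the infimum.

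For the reverse inequality I would substitute $\alpha_0=\gamma\,\varphi^{-\lambda}$. First one checks admissibility: $\int_X\alpha_0\,d\mu=\gamma\int_X\varphi^{-\lambda}\,d\mu=1$ by the very definition of $\gamma$. Then, using $1-\lambda q=-\lambda$,
\[
\int_X\varphi\,\alpha_0^{q}\,d\mu=\gamma^{q}\int_X\varphi^{\,1-\lambda q}\,d\mu=\gamma^{q}\int_X\varphi^{-\lambda}\,d\mu=\gamma^{q-1}=\left(\int_X\varphi^{-\lambda}\,d\mu\right)^{-1/\lambda},
\]
which coincides with the lower bound. This both identifies the value of $I(\varphi,q)$ and shows the infimum is attained at $\alpha_0$; equivalently, $\alpha_0$ is precisely the function that forces equality in the Hölder step above, which is the conceptual reason it is the minimizer.

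The computation itself is routine, so the only genuine care I would exercise is in the degenerate cases, which is where I expect the minor obstacle to lie. One must confirm that $\int_X\varphi^{-\lambda}\,d\mu$ is positive (this is automatic since $\varphi$ is finite-valued) and finite; when it is $+\infty$ the constant $\gamma$ collapses to $0$, the candidate $\alpha_0$ violates the normalization, and one should argue separately that $I(\varphi,q)=0$ by producing admissible $\alpha$ that concentrate mass where $\varphi$ is small and drive the objective to $0$, matching the convention that the right-hand side is then $0$. In the intended setting, where the integral is finite and positive, the two displayed inequalities close the argument immediately.
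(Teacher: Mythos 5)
Your argument is correct: the H\"older lower bound with exponents $q$ and $q'$ applied to the factorization $\alpha=(\varphi^{1/q}\alpha)\cdot\varphi^{-1/q}$, combined with direct evaluation at $\alpha_0=\gamma\varphi^{-\lambda}$, is exactly the standard proof of this fact, and your exponent bookkeeping checks out. Note that the paper itself offers no proof of this lemma --- it is quoted verbatim as an auxiliary result from the monograph \cite{MRSY} --- so there is nothing to compare against beyond observing that your route coincides with the classical one given there; your remark about the degenerate case $\int_X\varphi^{-\lambda}\,d\mu=\infty$ (where $\gamma=0$ and one must instead exhibit a minimizing sequence driving the infimum to $0$) is a sensible extra precaution that the statement as printed glosses over.
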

\medskip

Ниже приведен  критерий того, что гомеоморфизм в ${\Bbb C}$ является
нижним $Q$-гомеоморфизмом относительно p-модуля.

\begin{lemma}\label{sal1}  {\it Пусть $D$ -- область в $\mathbb{C}$,  $z_0\in D$, и пусть
$Q:D\to(0,\infty)$ -- измеримая функция. Гомеоморфизм $f:D\to\mathbb{C}$ является нижним
$Q-$го\-мео\-морфизмом в точке $z_0$ относительно $p$-мо\-ду\-ля при $p>1$ тогда и только тогда, когда
\begin{equation}\label{Sal_eq2.1.13}M_p(f\Sigma(z_0,r_1,r_2)\geq\int\limits_{r_1}^{r_2}
\frac{dr}{\left(\int\limits_{C(z_0,r)}Q^{\frac{1}{p-1}}(z)\, |dz|\right)^{p-1}},\end{equation}
для всех $0<r_1<r_2<d_0,$ где $d_0=dist(z_0, \partial D)$ и $\Sigma(z_0,r_1,r_2)$ -- семейство всех окружностей
$C(z_0,r)=\{z\in\mathbb{C}:|\,z-z_0|=r\}$, $r\in(r_1,r_2)$. Инфимум в (\ref{nizh}) достигается только для функции}

\begin{equation}\label{Sal_eq2.1.16}\varrho_0(z)=\frac{Q(z)}{\left(\int\limits_{C(z_0,|z-z_0|)}Q^{\frac{1}{p-1}}(z)\, |dz|\right)^{p-1}}\,.\end{equation}

\end{lemma}

\medskip

{\bf Доказательство.}  Для любой  функции $\rho(z)\in  ext_{p}adm\, \Sigma(z_0,r_1,r_2)$
следующая величина
$$A_{\varrho}(r):=\int\limits_{C(z_0,r)}\varrho(z)\,d{\cal A}\neq0\qquad \textrm{п.в.}$$
и является измеримой по параметру $r$, например, по теореме Фубини.
 Таким образом, мы можем требовать равенство
$A_{\varrho}(r)\equiv1$ п.в.
 вместо условия допустимости  (\ref{eq1.2KR}), и

$$\inf\limits_{\varrho\in ext_p\,adm\Sigma(z_0,r_1,r_2)}\ \int\limits_{
R(z_0,r_1,r_2)}\frac{\varrho^p(z)}{Q(z)}\,dm(z)=
\int\limits_{r_1}^{r_2}\left(\inf\limits_{\alpha\in
I(r)}\int\limits_{C(z_0,r)}\frac{\alpha^p(z)}{Q(z)}\,|dz|\right)dr\,,$$ где $I(r)$ --
множество всех измеримых функций $\alpha$ на окружности
$C(z_0,r)$ таких, что
$$\int\limits_{C(z_0,r)}\alpha(z)\,|dz|=1\,.$$

Итак, Лемма  \ref{sal1} следует из Леммы \ref{thKPR3.1} при $X=C(z_0,r)$, $\mu$ -- $1-$мерная длина  на $C(z_0,r)$, $\varphi=\frac{1}{Q}|_{C(z_0,r)}$. Теорема доказана.

\medskip

Таким образом, неравенство (\ref{Sal_eq2.1.13}) является точным для нижних $Q-$го\-мео\-мор\-физмов относительно $p$ -- модуля.

Неравенство (\ref{Sal_eq2.1.13}) можно переписать в несколько ином виде, который иногда будет более удобен для
дальнейшего исследования.

\bigskip

{\bf Теорема.} {\it Пусть   $Q:D\to(0,\infty)$ --- измеримая
функция и $f:D\to D'$  --- нижний
$Q-$ гомеоморфизм в точке $z_0\in D $ относительно $p$-модуля при $p>1$, тогда для любых $0<r_1<r_2< d_0= dist\,(z_0,\partial D)$

$$M_{\frac{p}{p-1}}\left(f\left(\Delta(C_{1}, C_{2}, D)\right)\right)\leq
\left(\int\limits_{r_1}^{r_2}
\frac{dr}{\|Q\|_{\frac{1}{p-1}}(r)}\right)^{-\frac{1}{p-1}}$$
где   $\|Q\|_{\frac{1}{p-1}}(r)=\left(\int\limits_{C(z_0,r)}
Q^{\frac{1}{p-1}}(z)\,|dz|\right)^{p-1}$ }

\begin{proof} Действительно, пусть
$0<r_1<r_2<d(z_0, \partial D)$ и $C_i=C(z_0, r_i),$ $i=1,2.$
Согласно неравенствам Хессе и Цимера, см., напр., \cite{Hes} и
\cite{Zi}, см. также приложения A3 и A6 в \cite{MRSY},
\begin{equation}\label{eqOS6.1aa} M_{\frac{p}{p-1}}\left(f\left(\Delta(C_{1}, C_{2}, D)\right)\right)\ \le
\frac{1}{M_p^{\frac{1}{p-1}}(f\left(\Sigma(z_0,r_1,r_2)\right))}\,,\end{equation}
поскольку
$f\left(\Sigma(z_0,r_1,r_2)\right)\subset\Sigma\left(f(C_1),f(C_2),f(D)\right),$
где $\Sigma(z_0,r_1,r_2)$ обозначает совокупность всех окружностей  с центром в точке
$z_0,$ расположенных между окружностями  $C_1$ и $C_2,$ а
$\Sigma\left(f(C_1),f(C_2),f(D)\right)$ состоит из всех кривых  в $f(D),$ отделяющих $f(C_1)$ и $f(C_2).$ Из
соотношения (\ref{eqOS6.1aa}) по предложению \ref{prOS2.2} получаем,
что
\begin{equation}\label{eqOS6.1b}
M_{\frac{p}{p-1}}\left(f\left(\Delta(C_{1}, C_{2},
D)\right)\right)\ \leq
\left(\int\limits_{r_1}^{r_2}
\frac{dr}{\left(\int\limits_{C(z_0,r)}
Q^{\frac{1}{p-1}}(z)\,|dz|\right)^{p-1}}\right)^{-\frac{1}{p-1}}\,.\end{equation}

\end{proof}

{\bf Лемма.} {\it Пусть $Q:D\rightarrow (0, \infty)$ -- измеримая функция, $Q \in L^{\frac{1}{p-1}}_{loc}(D)$ и   $f:D\to D'$  --- нижний
$Q-$ гомеоморфизм в точке $z_0\in D $ относительно $p$-модуля при $p>1$. Полагаем
$$\eta_0(t)=1/I\cdot\|Q\|_{\frac{1}{p-1}}(z_0,t),$$ где $\|Q\|_{n-1}(z_0,r)$, $r \in (r_1,r_2)$ и $I=I(z_0,r_1,r_2)$
определены в (8) и (10), соответственно. Тогда
$$ I^{-\frac{1}{p-1}}=\int\limits_{R(z_0,r_1,r_2)}
Q^{\frac{1}{p-1}}(z)\cdot\eta_0^\frac{p}{p-1}\left(|z-z_0|\right)dm(z)\leq$$
\begin{equation}\label{2.2}
\leq\int\limits_{R(z_0,r_1,r_2)}
Q^{\frac{1}{p-1}}(z)\cdot\eta^{\frac{p}{p-1}}\left(|z-z_0|\right)dm(z)
\end{equation} для любой измеримой функции
$\eta:(r_1,r_2)\to [0,\infty]$, такой, что
\begin{equation}\label{2.3}
 \int\limits_{r_1}^{r_2}\eta(r)dr=1.  \end{equation} }

\begin{proof}
Если $I=\infty$, то левая часть соотношения (11) 
равна нулю
 и неравенство в этом случае очевидно. Если $I=0$, то
 $\|Q\|_{\frac{1}{p-1}}(z_0,r)=\infty$ для п.в. $r\in (\varepsilon,\varepsilon_0)$ и обе части
неравенства (11) 
равны бесконечности по теореме Фубини и
замечанию 1. Пусть теперь $0<I<\infty$. Тогда
$\|Q\|_{\frac{1}{p-1}}(z_0,r)\neq0$ и $\eta_0(r)\neq\infty$ п.в. в
$(\varepsilon,\varepsilon_0)$. Полагая

 $$\alpha(r)=\eta(r)\cdot\|Q\|_{\frac{1}{p-1}}(z_0,r)$$ и $$
\omega(r)=[\|Q\|_{\frac{1}{p-1}}(z_0,r)]^{-1},
$$ по стандартным соглашениям будем иметь, что
$\eta(r)=\alpha(r)\omega(r)$ п.в. в  $(\varepsilon,\varepsilon_0)$ и что
$$C:=\int\limits_{A}
Q^{\frac{1}{p-1}}(x)\cdot\eta^{\frac{p}{p-1}}\left(|z-z_0|\right)\
dm(z)=\int\limits_{\varepsilon}^{\varepsilon_0}\alpha^{\frac{p}{p-1}}(r)\omega(r)\ dr.$$

Применяя неравенство Иенсена с весом, см. теорему 2.6.2 в
[14], 
к выпуклой функции $\varphi(t)=t^{\frac{p}{p-1}}$, заданной в
интервале $\Omega=(\varepsilon,\varepsilon_0)$, с вероятностной мерой
$$\nu(E)=\frac{1}{I}\int\limits_{E}\omega(r)\ dr,$$
получаем что $$\left(\dashint \alpha^{\frac{p}{p-1}}(r)\omega(r)dr\right)^{\frac{p-1}{p}}\geq \dashint
\alpha(r)\omega(r)\ dr =\frac{1}{I},$$ где мы также использовали
тот факт, что $\eta(r)=\alpha(r)\omega(r)$ удовлетворяет
соотношению (\ref{2.3}). Таким образом, $$C\geq
\frac{1}{I^{n-1}},$$ что и доказывает (12). 
\end{proof}

{\bf Теорема.} {\it Пусть   $Q:D\to(0,\infty)$ --- измеримая
функция и $f:D\to D'$  --- нижний
$Q-$ гомеоморфизм в точке $z_0\in D $ относительно $p$-модуля при $p>1$, тогда для любых $0<r_1<r_2< d_0= dist\,(z_0,\partial D)$

\begin{equation} M_{\frac{p}{p-1}}\left(f\left(\Delta(C_{1}, C_{2}, D)\right)\right)
\leq\int\limits_{R(z_0,r_1,r_2)}
Q^{\frac{1}{p-1}}(z)\cdot\eta^{\frac{p}{p-1}}\left(|z-z_0|\right)dm(z)
\end{equation} для любой измеримой функции
$\eta:(r_1,r_2)\to [0,\infty]$, такой, что
\begin{equation}\label{2.3}
 \int\limits_{r_1}^{r_2}\eta(r)dr=1.  \end{equation} }

\section{ Конечная липшицевость  нижних $Q$-гомеоморфизмов
относительно $p$-модуля.}

В дальнейшем рассматриваются открытые множества $\Omega$ в
$\mathbb{C}$ и непрерывные отображения
$f:\Omega\to\mathbb{C}$. Для $f:\Omega\to\mathbb{C}$ и
$x\in\Omega\subseteq\mathbb{C}$, положим
\begin{equation}\label{eq3.1.3}L(z,f)=\limsup_{\zeta\to
z}\frac{|f(\zeta)-f(z)|}{|\zeta-z|}\,.\end{equation}
 Будем говорить, что отображение
$f:\Omega\to\mathbb{C}$ является {\it конечно липшицевым}, если
\begin{equation}\label{eq3.1.5}L(z,f)<\infty\end{equation} для всех $z\in\Omega$,
Очевидно, что каждое липшицево отображение является конечно
липшицевым.

Ниже приведена теорема   о  достаточном условии локальной
липшицевости в точке для нижних  $Q$-гомеоморфизмов  относительно
$p$-модуля при $p>2$.

{\bf Лемма  2.}  {\it Пусть  $D$ и $D'$ -- области в $\mathbb{C}$,
 $Q:D\,\rightarrow\,[0,\,\infty]\,$ -- локально
интегрируемая  функция и   $f:D\to D'$ -- нижний $Q$-гомеоморфизм
относительно $p$-модуля в точке $z_0\in D$ с условием

$$ Q_0=\limsup\limits_{\varepsilon\to 0}\left(
\dashint_{B(z_0,\varepsilon) } Q^{\frac{1}{p-1}}(z)\, dm(z)\right)^{p-1}<\infty.
$$

Тогда при    $p>2$ имеем  $$L(z_0,f)=\limsup\limits_{z\to
z_0}\frac{|f(z)-f(z_0)|}{|z-z_0|}\leq \lambda_{p}\,
Q^{\frac{1}{p-2}}_{0},$$ где  $\lambda_{p}$ - положительная
постоянная, зависящая только  $p$. }

{\it Доказательство.} Рассмотрим сферическое кольцо
$R=R(z_0,\varepsilon_1, \varepsilon_2)$ с $0<\varepsilon_1<\varepsilon_2$ такое,
что $R(z_0,\varepsilon_1, \varepsilon_2)\subset D$. Тогда
$\left(fB\left(z_0,\varepsilon_2\right),\overline{fB\left(z_0,\varepsilon_1\right)}\right)$
-- кольцевой конденсатор в   $D'$ и, согласно (\ref{EMC}), имеем
равенство
$$ {\rm cap}_{\frac{p}{p-1}}\ (fB(z_0,\varepsilon_2),\overline{fB(z_0
,\varepsilon_1)})=M_{\frac{p}{p-1}}(\triangle(\partial
fB(z_0,\varepsilon_2),\partial f B(z_0,\varepsilon_1);fR)$$ а ввиду
гомеоморфности  $f,$ равенство
$$\triangle\left(\partial
fB\left(z_0,\varepsilon_2\right),\partial
fB\left(z_0,\varepsilon_1\right);fR\right)=f\left(\triangle\left(\partial
B(z_0,\varepsilon_2) ,\partial
B(z_0,\varepsilon_1);R\right)\right).$$

Рассмотрим функцию
$$ \eta(t)\,=\,\left
\{\begin{array}{rr} \frac{1}{\varepsilon_2-\varepsilon_1}, &  \ t\in (\varepsilon_1,\varepsilon_2) \\
0, & \ t\in \Bbb{R}\setminus (\varepsilon_1,\varepsilon_2).
\end{array}\right.
$$
В  силу определения  кольцевого  $Q$-гомеоморфизма
  относительно
$p$-модуля, замечаем, что

\begin{equation}\label{eq100} {\rm cap}_{\frac{p}{p-1}}\
(fB(z_0,\varepsilon_2),\overline{fB(z_0,\varepsilon_1)}) \leq (\varepsilon_2-\varepsilon_1)^{-\frac{p}{p-1}}\int\limits_{R(z_0,\varepsilon_1,
\varepsilon_2)} Q^{\frac{1}{p-1}}(z)\ dm(z)\ .\end{equation} Далее, выбирая
$\varepsilon_1=2\varepsilon$ и $\varepsilon_2=4\varepsilon$, получим

\begin{equation}\label{eq101}{\rm cap_{\frac{p}{p-1}}}\ (fB(z_0,4\varepsilon),f\overline{B(z_0,2\varepsilon)})\le\,
(2\varepsilon)^{-\frac{p}{p-1}}\int\limits_{B(z_0,4\varepsilon)}Q^{\frac{1}{p-1}}(z)\,dm(z)
\end{equation}
С другой стороны,  в силу  неравенства (\ref{maz}) вытекает оценка

\begin{equation}\label{eq102} {\rm cap_{\frac{p}{p-1}}}\ (fB(z_0,4\varepsilon),f\overline{B(z_0,2\varepsilon)})
\ge C_{p}\left[m(fB(z_0,2\varepsilon))\right]^{\frac{p-2}{2(p-1)}}
\end{equation}
где    $C_{p}$  -- положительная константа, зависящая только от  $p.$

Комбинируя   (\ref{eq101}) и (\ref{eq102}), получаем, что
\begin{equation}\label{eq4.2} \frac{m(fB(z_0,2\varepsilon))}{m(B(z_0,2\varepsilon))}\leqslant c_{p}\,\left[
\dashint_{B(z_0,4\varepsilon)}\, Q^{\frac{1}{p-1}}(z)\, dm(z)
\right]^{\frac{2(p-1)}{p-2}}\,,\end{equation} где $c_{p}$ -
положительная постоянная зависящая только от  $p$.

Далее, выбирая в (\ref{eq100}) $\varepsilon_1=\varepsilon$ и
$\varepsilon_2=2\varepsilon$, получим
\begin{equation}\label{eq91}{\rm cap_{{\frac{p}{p-1}}}}\ (fB(z_0,2\varepsilon),f\overline{B(z_0,\varepsilon)})\le\,
\varepsilon^{-{\frac{p}{p-1}}}\int\limits_{B(z_0,2\varepsilon)}Q^{{\frac{1}{p-1}}}(z)\,dm(z)\,.
\end{equation}
С другой стороны, в силу неравенства (\ref{krd}), получаем
\begin{equation}\label{eq10*!} {\rm cap_{\frac{p}{p-1}}}\ (fB(z_0,2\varepsilon),f\overline{B(z_0,\varepsilon)})
\ge\widetilde{C}_{p}\frac{d^{\frac{p}{p-1}}(fB(z_0,\varepsilon))}{m^{{\frac{1}{p-1}}}(fB(z_0,2\varepsilon))}
\end{equation}
где    $\widetilde{C}_{p}$  -- положительная константа, зависящая только  $p.$

Комбинируя   (\ref{eq91}) и (\ref{eq10*!}), получаем, что
$$\frac{d(fB(z_0,\varepsilon))}{\varepsilon}\le\gamma_{p}
\left(\frac{m(fB(z_0,2\varepsilon))}{m(B(z_0,2\varepsilon))}
\right)^{\frac{1}{p(p-1)}}\left( \dashint_{B(z_0,2\varepsilon)}
Q^{\frac{1}{p-1}}(z)\, dm(z)\right)^{\frac{p-1}{p}}\,,$$ где $\gamma_{p}$--
положительная константа, зависящая только от $p.$

Эта оценка вместе с  (\ref{eq4.2}) дает неравенство
$$\frac{d(fB(z_0,\varepsilon))}{\varepsilon}\leq\lambda_{p}\left(\dashint_{B(z_0,4\varepsilon)} Q^{\frac{1}{p-1}}(z)\,
dm(z)\right)^{\frac{2(p-1)}{p(p-2)}}\left[
\dashint_{B(z_0,2\varepsilon)}\, Q^{\frac{1}{p-1}}(z)\, dm(z)
\right]^{\frac{p-1}{p}}\,.$$ Переходя к верхнему пределу при
$\varepsilon\to 0$  немедленно вытекает заключение леммы
$$L(z_0,f)=\limsup\limits_{z\to
z_0}\frac{|f(z)-f(z_0)|}{|z-z_0|}\leq\limsup\limits_{\varepsilon\to
0}\frac{d(fB(z_0,\varepsilon))}{\varepsilon}\leq \lambda_{p}\,
Q^{\frac{1}{p-2}}_{0} ,
$$
где  $\lambda_{p}$ - положительная  постоянная, зависящая только
от  $p$.

\medskip

{\bf Теорема  3.}  {\it Пусть  $D$ и $D'$ -- области в $\mathbb{C}$,
 $Q:D\,\rightarrow\,[0,\,\infty]\,$ -- локально
интегрируемая  функция и   $f:D\to D'$ -- нижний $Q$-гомеоморфизм
относительно $p$-модуля $D$ с условием
$$
\limsup\limits_{\varepsilon\to 0}\left(
\dashint_{B(z_0,\varepsilon) } Q^{\frac{1}{p-1}}(z)\, dm(z)\right)^{p-1}<\infty \ \ \ \    \forall x_0\in D.
$$
Тогда при    $p>2$
гомеоморфизм  $f$ является конечно липшицевым. }

\medskip

{\bf Замечание.} В соответствии с леммой 10.6 в \cite{MRSY} конечно
липшицевые отображения обладают $N$-свойством относительно
хаусдорфовых мер и, таким образом, являются абсолютно непрерывными
на кривых.

\section{ Искажение площади круга.}
\setcounter{equation}{0}

В этом разделе получена  оценка площади образа круга при нижних
$Q$-гомеоморфизмах относительно $p$-модуля. Впервые оценка площади
образа круга при квазиконформных отоб\-ра\-же\-ниях встречается в
монографии  Лаврентьева М.А., см. \cite{La}.

\medskip

{\bf Теорема 3.1} {\it Пусть  $f$ -- нижний
$Q$-гомеоморфизм $\mathbb{B}$ в $\mathbb{B}$ относительно
$p$-модуля. Тогда при $p>2$ имеет место оценка
\begin{equation}\label{eqks*}m(fB_r)\leqslant\ \pi\cdot \left(1 +(2\pi)^{p-1}(p-2)\,
\int\limits_{r}^{1}
\frac{dt}{\|Q\|_{\frac{1}{p-1}}(t)} \right)^
{\frac{2}{2-p}}
,\end{equation} а при $p=2$
\begin{equation}\label{eqks**}m(fB_r)\leqslant \pi
\exp\left\{-4 \pi\int\limits_{r}^{1}\frac{dt}{\|Q\|_{1}(t)}\right\}\,.\end{equation}
}

\medskip

\begin{proof} Рассмотрим сферическое кольцо
$R_{t}=\{x\in\mathbb{B}^n:t<|x|<t+\triangle t\}$. Пусть
$(A_{t+\triangle t},C_t)$ -- конденсатор, где $C_t=\overline{B_t},
A_{t+\triangle t}=B_{t+\triangle t}$. Тогда $(fA_{t+\triangle
t},fC_t)$ -- кольцевой конденсатор в $\mathbb{C}$ и согласно
(\ref{eqks2.7}) имеем \begin{equation}\label{eqks1.8}{\rm
cap}_{\frac{p}{p-1}}(fA_{t+\triangle t},fC_t)=M_{\frac{p}{p-1}}(\triangle(\partial
fA_{t+\triangle t},\partial fC_t;fR_{t})).\end{equation} В силу
неравенства (\ref{eqks2.8}) получим
\begin{equation}\label{eqks1.9}{\rm cap}_{\frac{p}{p-1}}\left(fA_{t+\triangle
t}, fC_t\right)\geqslant\frac{\left(\inf
m_{1}\,\sigma\right)^{\frac{p}{p-1}}}{m\left(fA_{t+\Delta t}\setminus
fC_t\right)^{\frac{1}{p-1}}},\end{equation} где $ m_{1}\,\sigma$ --
$1$-мерная мера Лебега $C^{\infty}$-многообразия $\sigma$,
являющегося границей $\sigma=\partial U$ ограниченного открытого
множества $U$, содержащего $fC_t$ и содержащегося вместе со своим
замыканием $\overline{U}$ в $fA_{t+\triangle t}$, а точная нижняя
грань берется по всем таким $\sigma$.

С другой стороны, в силу леммы 2.1, имеем \begin{equation}
\label{eqks1.10}{\rm cap}_{\frac{p}{p-1}}\left(fA_{t+\triangle t},
fC_t\right)\leqslant \left(\int\limits_{t}^{t+\triangle t}
\frac{dr}{\|Q\|_{\frac{1}{p-1}}(r)}\right)^{-\frac{1}{p-1}}\,.
\end{equation}
Комбинируя  неравенства (\ref{eqks1.9}) и (\ref{eqks1.10}), получим
$$\frac{\left(\inf
m_{1}\,\sigma\right)^{\frac{p}{p-1}}}{m\left(fA_{t+\Delta t}\setminus
fC_t\right)^{\frac{1}{p-1}}}\leqslant \left(\int\limits_{t}^{t+\triangle t}
\frac{dr}{\|Q\|_{\frac{1}{p-1}}(r)}\right)^{-\frac{1}{p-1}}\,.$$ Далее, воспользовавшись
изопериметрическим неравенством
$$\inf m_{1}\, \sigma\geqslant 2\cdot\pi^{\frac{1}{2}}
\left(m(fC_t)\right)^{\frac{1}{2}},$$ получим

\begin{equation}\label{eqks1.11}2\cdot\pi^{\frac{1}{2}}
\left(m(fC_t)\right)^{\frac{1}{2}}\leq
\left(\frac{m\left(fA_{t+\Delta t}\setminus
fC_t\right)}{\int\limits_{t}^{t+\triangle t}
\frac{dr}{\|Q\|_{\frac{1}{p-1}}(r)}}\right)^{\frac{1}{p}}.\end{equation}

Полагая $\Phi(t):=m\left(fB_t\right)$,  из соотношения
(\ref{eqks1.11}) имеем, что

\begin{equation}\label{eqks1.12}2\cdot\pi^{\frac{1}{2}}\Phi^{\frac{1}{2}}(t)\leq
 \left(\frac{\frac{\Phi(t+\Delta
t)-\Phi(t)}{\Delta t}}{\frac{1}{\Delta t} \int\limits_{t}^{t+\triangle t}
\frac{dr}{\|Q\|_{\frac{1}{p-1}}(r)}
}\right)^{\frac{1}{p}} \,.\end{equation}

Заметим, что в   силу теоремы 2.3 и гомеоморфности отображения $f$,
$$\|Q\|^{-1}_{\frac{1}{p-1}}(r)\in L^{1}_{loc}
(0,1)$$

Устремляя в неравенстве (\ref{eqks1.12}) $\Delta t$ к нулю, и
учитывая монотонное возрастание функции $\Phi(t)$ по $t\in(0,1)$ и
равенство $\omega_{n-1}=n\Omega_n$, для п.в. $t$ имеем существование
производной $\Phi'(t)$ и

\begin{equation}\label{eqks1.14} 2^p \pi^{\frac{p}{2}} \|Q\|^{-1}_{\frac{1}{p-1}}(t)\leqslant \frac{\Phi'(t)}{\Phi^{\frac{p}{2}}(t)}
.\end{equation}

Рассмотрим неравенство (\ref{eqks1.14}) при $p>2$. Интегрируя обе
части неравенства по $t\in [r,1]$ и учитывая, что
$$\int\limits_{r}^{1}\frac{\Phi'(t)}{\Phi^{\frac{p}{2}}(t)}\,dt\leqslant
\frac{2}{2-p}\left(\Phi^{\frac{2-p}{2}}(1)-\Phi^{\frac{2-p}{2}}(r)\right),$$
см., напр., теорему  IV. 7.4 в \cite{Sa}, получим
\begin{equation}\label{eqks1.15}2^{p} \pi^{\frac{p}{2}}\int\limits_{r}^{1} \frac{dt}{\|Q\|_{\frac{1}{p-1}}(t) } \leqslant \frac{2}{2-p}\left(\Phi^{\frac{2-p}{2}}(1)-\Phi^{\frac{2-p}{2}}(r)\right)
.\end{equation}
Из неравенства (\ref{eqks1.15}) получаем, что
$$\Phi(r)\leqslant\left(\Phi^{\frac{2-p}{2}}(1)-2^{p-1}(2-p)\pi^{\frac{p}{2}}\,
\int\limits_{r}^{1}
\frac{dt}{\|Q\|_{\frac{1}{p-1}}(t)} \right)^
{\frac{2}{2-p}}.$$ Наконец,  учитывая, что
$m(f\mathbb{B})\leq \pi$, приходим к (\ref{eqks*}).

Осталось рассмотреть случай $p=2$. В этом случае неравенство
(\ref{eqks1.14}) примет вид:

\begin{equation}\label{eqks1.14uii} \frac{4 \pi}{\|Q\|_{1}(t)}\leqslant \frac{\Phi'(t)}{\Phi(t)}
.\end{equation}

Интегрируя обе части неравенства (\ref{eqks1.14uii}) по $t\in[r,1]$,
учитывая, что
$$\int\limits_{r}^{1}\frac{\Phi'(t)}{\Phi(t)}dt\leqslant\ln\frac{\Phi(1)}{\Phi(r)},$$
см., напр., теорему  IV. 7.4 в \cite{Sa}, получим
$$4 \pi\int\limits_{r}^{1}\frac{dt}{\|Q\|_{1}(t)}\leqslant\ln\frac{\Phi(1)}{\Phi(r)}.$$ И, следовательно, имеем
$$\exp\left\{ 4 \pi\int\limits_{r}^{1}\frac{dt}{\|Q\|_{1}(t)}\right\}\leqslant
\frac{\Phi(1)}{\Phi(r)},$$ а потому
$$\Phi(r)\leqslant\Phi(1)\cdot
\exp\left\{-4\pi\int\limits_{r}^{1}\frac{dt}{\|Q\|_{1}(t)}\right\}
\,,$$ что
и приводит нас к неравенству (\ref{eqks**}) поскульку $\Phi(1)\leq
\pi$.

 \end{proof}

\bigskip

\medskip
\section{ Поведение в точке.}
\setcounter{equation}{0}

Теорема, приведенная в предыдущей секции, позволяет  нам также
описать асимптотическое поведение кольцевых $Q$-гомеоморфизмов
относительно $p$-модуля в нуле.

\medskip

{\bf Предложение 4.1.} {\it Пусть $f$ -- гомеоморфизм
 $\mathbb{C}$ в $\mathbb{C}$, $f(0)=0$. Если
\begin{equation}\label{eqks1.18}m(fB_r)\leqslant\pi\,\mathcal{R}^{2}(r),\end{equation} то
\begin{equation}\label{eqks1.19}\liminf\limits_{z\to 0}\frac{|f(z)|}{\mathcal{R}(|z|)}\leqslant1.\end{equation}}

\begin{proof} Положим $l_f(r)=\min\limits_{|z|=r}|f(z)|$.
Тогда, учитывая, что $f(0)=0$, получаем
$\pi\,l_{f}^{2}(r)\leqslant m(fB_r)$ и, следовательно,
\begin{equation}\label{eqks1.20}l_{f}(r)\leqslant\left(\frac{m(fB_r)}{\pi}\right)^{\frac{1}{2}}.\end{equation}
Таким образом, учитывая  неравенство  (\ref{eqks1.18}), имеем
$$\liminf\limits_{z\to 0}\frac{|f(z)|}{\mathcal{R}(|z|)}=\liminf\limits_{r\to0}\frac{l_{f}(r)}{\mathcal{R}(r)}
\leqslant\liminf\limits_{r\to0}\left(\frac{m(fB_r)}{\pi}\right)^{\frac{1}{2}}\cdot\frac{1}{\mathcal{R}(r)}\leqslant1.$$
Предложение доказано. \end{proof}

\medskip

Комбинируя теорему  3.1 и предложение 4.1 с функцией
$$\mathcal{R}(r)=\left(1 +(2\pi)^{p-1}(p-2)\,
\int\limits_{r}^{1}
\frac{dt}{\|Q\|_{\frac{1}{p-1}}(t)} \right)^
{-\frac{1}{p-2}}  $$
при $p>2$  и
$$\mathcal{R}(r)=\exp\left\{-2 \pi\int\limits_{r}^{1}\frac{dt}{\|Q\|_{1}(t)}\right\}$$
при $p=2$, получаем следующий результат.

\medskip

{\bf Теорема 4.2.} Пусть $f$ -- кольцевой $Q$-гомеоморфизм
  относительно
$p$-модуля $\mathbb{B}$ в $\mathbb{B}$,  и
$f(0)=0$. Тогда при $p>2$ имеет место оценка
\begin{equation}\label{eqks1.21}\liminf\limits_{z\to 0}\,|f(z)|\cdot
\left(1 +(2\pi)^{p-1}(p-2)\,
\int\limits_{|z|}^{1}
\frac{dt}{\|Q\|_{\frac{1}{p-1}}(t)} \right)^
{\frac{1}{p-2}}\leqslant1,\end{equation}
а при $p=2$
\begin{equation}\label{eqks1.22}\liminf\limits_{z\to 0}\,|f(z)|\cdot
\exp\left\{2 \pi\int\limits_{|z|}^{1}\frac{dt}{\|Q\|_{1}(t)}\right\}\leqslant1
\,.\end{equation}

\medskip

{\bf Следствие 4.3.} Пусть $f$ -- нижний  $Q$-гомеоморфизм
  относительно
$p$-модуля $\mathbb{B}$ в $\mathbb{B}$  и
$f(0)=0$. Тогда при $p>2$ имеет место оценка
\begin{equation}\label{eqks1.21}\liminf\limits_{z\to 0}\,|f(z)|\cdot
\left(\int\limits_{|z|}^{1}
\frac{dt}{\|Q\|_{\frac{1}{p-1}}(t)} \right)^
{\frac{1}{p-2}}\leqslant(2\pi)^{1-p}(p-2)^{\frac{1}{2-p}}.\end{equation}

\section{ О взаимосвязи с классами Соболева }

\begin{theorem}\label{thKPR3.1} {\it Пусть $D$ и $D'$ -- области в $\mathbb{C}$ и $f:D\to D' $ -- гомеоморфизм класса Соболева $W^{1,1}_{\rm loc}$ с конечным искажением. Тогда $f$ является
нижним $Q$-гомеоморфизмом относительно $p$-модуля  в каждой точке $z_0\in D$ с
$Q(z)=K_{p}(f,z)$ .} \end{theorem}

\begin{proof} Пусть $\mathfrak{B}$ -- множество (борелевское!) всех
точек $z$ из $D$, в которых $f$ имеет полный дифференциал с $J_{f}(z)\neq0$. Известно, что $\mathfrak{B}$ можно представить
в виде объединения счетного набора борелевских множеств $\mathfrak{B}_l$, $l=1,2,\ldots$, таких, что $f_l=f|_{\mathfrak{B}_l}$ являются
билипшицевыми гомеоморфизмами, см., например, лемму 3.2.2 в \cite{Fe_1}. Без потери общности можно считать, что
$\mathfrak{B}_l$ попарно не пересекаются. Пусть $\mathfrak{B}_{*}$ -- множество всех точек $z\in D$, где $f$ имеет полный дифференциал
с $f_{z}=0=f_{\bar z}$.

Заметим, что по известной теореме Геринга--Лехто--Меньшова множество $\mathfrak{B}_0=D\setminus(\mathfrak{B}\cup \mathfrak{B}_{*})$ имеет нулевую
меру Лебега в ${\Bbb C}$, см. \cite{GL} и \cite{Me}. Следовательно, по теореме 2.11 в \cite{KR$_2$}, см. также
лемму 9.1 в \cite{MRSY}, $l(\gamma\cap \mathfrak{B}_0)=0$ для п.в. штриховых линий $\gamma$ в $D$. Покажем также, что
$l(f(\gamma)\cap f(\mathfrak{B}_0))=0$ для п.в. окружностей $\gamma$ с центром в точке $z_0$.

Последнее вытекает из абсолютной непрерывности $f$ на замкнутых подду\-гах $\gamma\cap D$ для п.в. окружностей
$\gamma$. Действительно, класс $W^{1,1}_{\rm loc}$ является инвариантным относительно локально
квазиизометрических преобразований независимой переменной, см., например, теорему 1.1.7 в \cite{Maz}, и функции
из $W^{1,1}_{\rm loc}$ абсолютно непрерывны на линиях, см., например, теорему 1.1.3 в \cite{Maz}. Применяя, к
примеру, преобразование координат $\log(z-z_0)$, мы приходим к абсолютной непрерывности на п.в. окружностях
$\gamma$ с центром в точке $z_0$.

Таким образом, $l(\gamma_{*}\cap f(\mathfrak{B}_0))=0$, где $\gamma_{*}=f(\gamma)$, для п.в. окружностей $\gamma$ с центром
в точке $z_0$. Пусть теперь $\varrho_*\in{\rm adm}\,f(\Gamma)$, $\varrho_*\equiv0$ вне $f(D)$, где $\Gamma$ --
совокупность всех штриховых линий, образованных пересечениями всех окружностей $\gamma$ с центром в точке $z_0$.
Пусть $\varrho\equiv0$ вне $D$ и
$$\varrho(z)\colon=\varrho_*(f(z))\left(|f_{z}|+|f_{\bar z}|\right)\qquad
\text{для\ п.в.}\ \ z\in D.$$

Рассуждая кусочно на $\mathfrak{B}_l$, мы имеем по теореме 3.2.5 из \cite{Fe_1} (при $m=1$), что
$$\int\limits_{\gamma}\varrho\,ds\ \geqslant\
\int\limits_{\gamma_{*}}\varrho_{*}\,ds_{*}\ \geqslant\ 1\qquad \text{для\ п.в.}\ \ \gamma\in\Gamma,$$ поскольку
$l(f(\gamma)\cap f(\mathfrak{B}_0))=0$, а также $l(f(\gamma)\cap f(\mathfrak{B}_{*}))=0$ для $p$-п.в. $\gamma\in\Gamma$ ввиду абсолютной
непрерывности $f$ на $p$-п.в. $\gamma\in\Gamma$. Следовательно, $\varrho\in{\mathrm{ext_p\,adm}}\,\Gamma$.

С другой стороны, еще раз рассуждая кусочно на $\mathfrak{B}_l$, мы имеем неравенство
$$\int\limits_{D}\frac{\varrho^p(x)}{K_{O,p}(z,f)}\ dm(z)\ \leqslant\
\int\limits_{f(D)}\varrho^p_*(w)\ dm(w),$$ поскольку $\varrho(z)=0$ на $\mathfrak{B}_{*}$. Следовательно, мы получаем, что
$$M_p(f\Gamma)\ \geqslant\ \inf\limits_{\varrho\in
{\mathrm{ext_p\,adm}}\,\Gamma}\int\limits_{D}\frac{\varrho^p(z)}{K_{p}(z,f)}\ dm(z)\,,$$ т.е., $f$ действительно
является нижним $Q-$гомеоморфизмом с $Q(z)=K_{p}(z,f)$  относительно $p$-модуля.  \end{proof}
\bigskip

КОНТАКТНАЯ ИНФОРМАЦИЯ

\medskip
\noindent{{\bf Салимов Руслан Радикович}
\\Институт
прикладной математики и механики НАН Украины \\
83 114 Украина, г. Донецк, ул. Розы Люксембург, д. 74, \\отдел
теории функций, раб. тел. (380) -- 62 -- 311 01 45, \\ 
e-mail: brusin2006@rambler.ru; ruslan623@yandex.ru}

\end{document}